\def\R{{\mathbb{R}}}
\def\V{{\mathbb{V}}}
\def\C{{\mathcal{C}}}
\def\N{\rm \hbox{I\kern-.2em\hbox{N}}}
\def\Z{\rm \hbox{Z\kern-.3em\hbox{Z}}}
\def\e{\varepsilon}
\def\d{\delta}
\def\O{\Omega}
\def\e{\varepsilon}
\def\XXint#1#2#3{{\setbox0=\hbox{$#1{#2#3}{\int}$} 
		\vcenter{\hbox{$#2#3$}}\kern-.5\wd0}}
\def\R{\mathbb{R}}
\def\O{{\Omega}}
\def\epsilon{\varepsilon}
\newcommand{\be}{\begin{equation}}
\newcommand{\ee}{\end{equation}}
\newcommand{\bes}{\begin{equation*}}
\newcommand{\ees}{\end{equation*}}
\newcommand{\bea}{\begin{eqnarray}}
\newcommand{\eea}{\end{eqnarray}}
\newcommand{\beas}{\begin{eqnarray*}}
	\newcommand{\eeas}{\end{eqnarray*}}
\newcommand{\F}{{\cal F}}
\begin{document}

\title*{Operator perturbation  approach for fourth order elliptic equations with variable coefficients }

\titlerunning{Operator perturbation  for fourth order elliptic equations with variable coefficients}

\author{J. \ Orlik, H. \ Andr\"a,  \& S. \ Staub}

\authorrunning{J. Orlik et al.}

\institute{J. \ Orlik,  H. \ Andr\"a and S. Staub \at Fraunhofer ITWM, Kaiserslautern, Germany; \\ 
	\email{julia.orlik@itwm.fraunhofer.de}, \email{heiko.andrae@itwm.fraunhofer.de}, \email{sarah.staub@itwm.fraunhofer.de}}
	
\maketitle
	

	
%
\abstract{
	The homogenization of elliptic divergence-type fourth-order operators with
	periodic coefficients is studied in a (periodic) domain. The aim is to find an operator with constant coefficients and represent the equation through a perturbation around this operator. The resolvent is found as $L^2 \to L^2$ operator using the Neumann series for the periodic fundamental solution of biharmonic operator. Results are based on some auxiliary Lemmas suggested by Bensoussan in 1986, Zhikov in 1991, Yu. Grabovsky and G. Milton in 1998, Pastukhova in 2016.  Operators of the type considered in the paper appear in the
	study of the elastic properties of thin plates. The choice of the operator with constant coefficients  is discussed separately and chosen in an optimal way w.r.t. the spectral radius and convergence of the Neumann series and uses the known bounds for ''homogenized'' coefficients. Similar ideas are usually applied for the construction of preconditioners for iterative solvers for finite dimensional problems resulting from discretized PDEs. The method presented is similar to Cholesky factorization transferred to elliptic operators (as in references mentioned above). Furthermore, the method can be applied to non-linear problems.}	

\setcounter{equation}{0}

	\bigskip
\def\e{\varepsilon}
\def\d{\partial}
\def\je{^{j}_{\e}}
\def\O{\Omega}
\def\vphi{\varphi}
\def\R{{\mathbb{R}}}
\def\Z{{\mathbb{Z}}}
\def\V{{\mathbb{V}}}
\def\C{{\mathbb{C}}}
\def\blangle{\Big\langle}
\def\brangle{\Big\rangle}
\def\F{{\mathscr{F}}}
\def\O{{\mathcal{O}}}
\def\fcpv{{m}}
\newcommand{\bA}{\ensuremath{\boldsymbol{A}}}
\newcommand{\bV}{\ensuremath{\boldsymbol{V}}}
\newcommand{\bP}{\ensuremath{\boldsymbol{P}}}
\newcommand{\bQ}{\ensuremath{\boldsymbol{Q}}}
\newcommand{\bsig}{\ensuremath{\boldsymbol{\sigma}}}
\newcommand{\bu}{\ensuremath{\boldsymbol{u}}}
\newcommand{\bfs}{\ensuremath{\boldsymbol{f}}}
\newcommand{\bx}{\ensuremath{\boldsymbol{x}}}
\newcommand{\bd}{\ensuremath{\boldsymbol{d}}}
\newcommand{\bT}{\ensuremath{\boldsymbol{T}}}
\newcommand{\bl}{\ensuremath{\boldsymbol{l}}}
\newcommand{\bI}{\ensuremath{\boldsymbol{I}}}
\newcommand{\bS}{\ensuremath{\boldsymbol{S}}}
\newcommand{\bC}{\ensuremath{\boldsymbol{C}}}
\newcommand{\bs}{\ensuremath{\boldsymbol{s}}}
\newcommand{\bw}{\ensuremath{\boldsymbol{w}}}
\newcommand{\bn}{\ensuremath{\boldsymbol{n}}}
\newcommand{\bg}{\ensuremath{\boldsymbol{g}}}
\newcommand{\eps}{\ensuremath{\varepsilon}}
\newcommand{\PoissonRatioSym}{\ensuremath{\nu}}
\newcommand{\poir}{\ensuremath{\PoissonRatioSym}}
\newcommand{\pxy}{\ensuremath{\PoissonRatioSym_{12}}}
\newcommand{\pxz}{\ensuremath{\PoissonRatioSym_{13}}}
\newcommand{\pyx}{\ensuremath{\PoissonRatioSym_{21}}}
\newcommand{\pyz}{\ensuremath{\PoissonRatioSym_{23}}}
\newcommand{\pzx}{\ensuremath{\PoissonRatioSym_{31}}}
\newcommand{\pzy}{\ensuremath{\PoissonRatioSym_{32}}}
\newcommand{\pij}{\ensuremath{\PoissonRatioSym_{ij}}}
\newcommand{\homog}{\ensuremath{\mathrm{hom}}}
\newcommand{\bnd}{\ensuremath{\partial}}
\newcommand{\jump}[1]{\ensuremath{\left[#1\right]}}
\newcommand{\RobNorm}{\ensuremath{r_n}}
\newcommand{\RobTang}{\ensuremath{r_t}}
\newcommand{\eprod}{\ensuremath{\otimes}}
\newcommand{\vrtr}{\ensuremath{\delta_{\bg}}}
\newcommand{\ggrad}{\ensuremath{\nabla_{\bg}}}
\newcommand{\grag}{\ensuremath{\delta_{\bg}}}
\newcommand{\vrtw}{\ensuremath{\delta_{\bw}}}
\newcommand{\Ahom}{\ensuremath{{\bA^{\homog}}}}
\newcommand{\Ahomc}{\ensuremath{{A^{\homog}}}}
\newcommand{\sAhom}{\ensuremath{\bA^{\homog}_{\mathrm{sym}}}}
\newcommand{\vAhom}{\ensuremath{\vrtr\Ahom}}
\newcommand{\vuh}{\ensuremath{\vrtr\bu^0}}
\newcommand{\gadm}{\ensuremath{U_{\bg}}}
\newcommand{\strcom}{\ensuremath{\sigma}}

\section{Periodic boundary value problem problem}

Let $Y' \in \R^d$ be a bounded domain with a heterogeneous structure, such that $Y'$ is a representative periodic cell of the structure. For the periodic functions $\varphi$ on $Y'$ with a zero mean value 
$\langle \varphi \rangle = 0$ we introduce the Sobolev space
\[
H^2_{per[0]}(Y') = \{\varphi \in H^2_{per}(Y') : \langle \varphi \rangle = 0\}.
\]
Furthermore, we use the abstract index notation for tensors in $\R^d$. We introduce latin indices $ijkl=1,...,d$
and and unit vectors $e_i$, $i=1,...,d$ . 
We consider the periodic boundary value problem (PBVP) to find $w\in H^2_{per[0]}(Y')$ from
\begin{align}
&\frac{\partial^2 }{\partial y_k \partial y_l}\left(C_{ijkl}(y) 
\left(E_{0\  i j}+\frac{\partial^2 w(y)}{\partial y_i \partial y_j}\right) \right) =0, \quad \forall y \in Y'.
\label{4-plane}
\end{align}
Here, the tensor $E_0\in \R^{d\times d}$ denotes the given mean value of rotational deformations.\\
The coefficients of the fourth-order tensor $C\in L^\infty(Y',\R^{d\times d\times d \times d})$ are bounded and satisfy the symmetry conditions 
\begin{align}
C_{ijkl} = C_{klij},\  C_{ijkl} = C_{jikl} = C_{ijlk},\;
\exists \alpha > 0 : \ \alpha \xi : \xi \le  \xi : C(y) : \xi \le \alpha^{-1} \xi : \xi
\label{coefbd}
\end{align}
for any symmetric non-zero tensor $\xi \in \R^{d\times d}$ and $y \in Y'$.

\begin{remark}
In the two-dimensional case $Y' \subset \R^2$, one important application is the bending of plates made of heterogeneous materials. 
According to \cite{fjm}, for thin three-dimensional plate-like structures, where the ratio between the thickness and the representative lateral size tends to zero, the theoretical limits leads to averaging in the thickness and then solving the plate bending equation with variable coefficients in 2D-domains. 
\end{remark}

\subsection{Problem in the weak and operator form}
For a scalar function $w$ and a tensor-valued function $g$ we introduce the following operators to shorten the notation:
\begin{eqnarray}\label{2.2}
&&D w (y):=  \nabla \otimes \nabla w(y) = \frac{\partial w}{\partial y_{k}\partial y_{l}}(y) e_k \otimes e_l ,\\
&& D^*g(y) := \nabla \cdot \nabla \cdot g(y) =\mathrm{div}\  \mathrm{div} \, g(y) = \frac{\partial^2 g_{kl}}{\partial y_{k}\partial y_{l}}(y) ,\quad \Delta^2=D^*D,\\
&& [Aw](y):=D^*(C(y)Dw),
\end{eqnarray} 
where Einstein's summation convention is used on the last line.
So equation \eqref{4-plane} can be written in the short form
\begin{eqnarray}
	&& D^*(C(y) (E_0 + Dw(y)))=0, \ \  y\in Y', \ \  \textrm{or} \quad Aw=-D^*g, \quad g:=C(y): E_0. 
	\label{strong}
\end{eqnarray}
%
The weak formulation of the problem \eqref{4-plane} is given by
\begin{equation}
\langle C(D w +E_0):  D\varphi \rangle = 0,\  \forall \varphi \in  
H^2_{per[0]}(Y').
\label{weak}
\end{equation}
where $C=C(y)$
%
Now, we introduce the bilinear form 
	\begin{align}
	c(w, v):=&\int_{Y'}    
	C_{ijkl}(y) [\nabla \otimes \nabla w]_{i j}  [\nabla \otimes \nabla v]_{kl}(y)  dy ,\quad \forall w,v\in H^2_{per[0]}(Y'),
	\end{align}
which is symmetric, because the tensor of coefficients is symmetric,
	continuous and coercive. \\
	We also introduce an operator $L: H^2(Y')\to H^{-2}(Y')$ and the right-hand side  functional 
	\begin{equation}
	L 
	w:= c(w,\cdot) \label{Cx},\quad {l_y}(v)=\int_{Y'}D^*: gv(y)dy.
	\end{equation}
	Our problem \eqref{4-plane} can be reformulated in the operator form: Find $u\in H^2_{per[0]}(Y')$ for ${l_y}\in H^{-2}(Y')$ and $L\in {\cal L}(H_{per}^2(Y'), H^{-2})$, satisfying
	\begin{equation}
	L 
	w={l_y}.
	\label{operat-eq}
	\end{equation}
According to \cite{Pastukhova}, the norm in
$H^2_{per[0]}(Y')$ can be introduced in one of the  equivalent
ways:
\begin{eqnarray}
	||	\varphi ||^2
	_{H^2_{per[0]}(Y)}
	= <|\nabla^2\varphi|^2 + |\nabla \varphi|^2 + | \varphi|^2>, \quad
	||	\varphi ||^2_{H^2_{per[0]}(Y)}
	= <|\nabla^2\varphi|^2>
	\label{poinc}
\end{eqnarray}
The equivalence of the norms is ensured by the Poincare inequality
$$<|\varphi|^2>\  \le\  C_P <|\nabla \varphi|^2> \ \forall \varphi \in  H^1(Y')\  \textrm{with}\  <\varphi> = 0.$$\\
%
The proof of the following existence and uniqueness result for the week solution to problem \eqref{operat-eq} can be found, e.g. in \cite{Pastukhova}:
\begin{lemma}
	By the Riesz and Lax-Milgram theorem, the bending problem admits a unique solution in $H^2_{per[0]}(Y')$.
	$w$ depends continuously on $E_0$, since \\ $L\in {\cal L}(H^2_{per[0]}(Y'), H^{-2}(Y'))$ with continuous bounded inverse  $$||L^{-1}||_{{\cal L}(H^{-2}(Y'),H^2_{per[0]}(Y'))}\le \frac{1}{\alpha}.$$
	\label{inv}
\end{lemma}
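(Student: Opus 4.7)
The plan is to apply the Lax--Milgram theorem to the weak formulation \eqref{weak} on the Hilbert space $H^2_{per[0]}(Y')$ equipped with the equivalent inner product $\langle w,v\rangle := \langle \nabla^2 w : \nabla^2 v\rangle$, whose induced norm coincides with the second norm in \eqref{poinc}. The equivalence of this norm with the full $H^2$ norm is the content of \eqref{poinc} and follows from iterating the Poincar\'e inequality: first applied to $\varphi$ with $\langle \varphi\rangle = 0$, then applied componentwise to $\nabla\varphi$, whose periodic-cell mean also vanishes because $\nabla\varphi$ is itself periodic and a derivative. With this norm, $H^2_{per[0]}(Y')$ is a Hilbert space.

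Next, I would verify that the bilinear form $c(\cdot,\cdot)$ is continuous and coercive on this space. Continuity is immediate from the pointwise upper bound in \eqref{coefbd}:
\begin{equation*}
|c(w,v)| \le \alpha^{-1}\int_{Y'} |Dw||Dv|\,dy \le \alpha^{-1}\,\|w\|_{H^2_{per[0]}(Y')}\,\|v\|_{H^2_{per[0]}(Y')},
\end{equation*}
using Cauchy--Schwarz and the chosen norm. Coercivity follows from the pointwise lower bound in \eqref{coefbd} applied to the symmetric tensor $\xi = Dw$ (symmetric because of equality of mixed partials), giving
\begin{equation*}
c(w,w) \ge \alpha\int_{Y'} |Dw|^2\,dy = \alpha\,\|w\|^2_{H^2_{per[0]}(Y')}.
\end{equation*}

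For the right-hand side, observe that $g = C : E_0 \in L^\infty(Y')$, so that after interpreting $D^* g$ distributionally one has $l_y(v) = \int_{Y'} g : Dv\, dy$, and
\begin{equation*}
|l_y(v)| \le \|g\|_{L^2(Y')}\|Dv\|_{L^2(Y')} \le |Y'|^{1/2}\|C\|_\infty|E_0|\,\|v\|_{H^2_{per[0]}(Y')},
\end{equation*}
so $l_y \in H^{-2}(Y')$. The Lax--Milgram theorem then produces a unique $w \in H^2_{per[0]}(Y')$ solving \eqref{operat-eq}, and the coercivity constant $\alpha$ yields the operator bound $\|L^{-1}\|_{\mathcal{L}(H^{-2},H^2_{per[0]})} \le 1/\alpha$ by testing the variational identity against $\varphi = w$ and dividing. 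Continuous dependence on $E_0$ is obtained by linearity: the map $E_0 \mapsto l_y$ is bounded, so composition with $L^{-1}$ gives a bounded linear map $E_0 \mapsto w$.

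The only step that requires genuine care, rather than routine verification, is the equivalence of norms in \eqref{poinc} on the zero-mean subspace; everything else is a textbook Lax--Milgram application. Since \eqref{poinc} is cited from \cite{Pastukhova} and the coercivity/boundedness of $C$ is assumed in \eqref{coefbd}, the proof reduces to checking the two inequalities above and invoking Lax--Milgram, with no substantial obstacle.
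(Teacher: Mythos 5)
Your proof is correct and follows exactly the route the paper indicates: the paper itself gives no argument, merely citing \cite{Pastukhova}, and the lemma statement names Riesz/Lax--Milgram, which is precisely what you supply. Your verification of coercivity and boundedness from \eqref{coefbd} (using Cauchy--Schwarz for the $C$-bilinear form on symmetric tensors), the norm equivalence via iterated Poincar\'e justified by $\langle\nabla\varphi\rangle=0$ for periodic $\varphi$, and the bound $\|L^{-1}\|\le 1/\alpha$ by testing with $w$ itself are all the standard steps and are carried out correctly.
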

\subsection{Integral approach for constant coefficients. Green's identity and function }
Assume for this subsection that problem \eqref{strong} has constant coefficients.
For any periodic
functions $w,v \in C^4(\bar{Y'})$
 we have the second Green identity,
\be
\label{2.5-2}
\int\limits_{Y'}\big[\,  v\,Aw - Av\,\,w\,\big]\,dx=
\left\langle  T^{+}w\,,\, \gamma^+v\right\rangle _{_{\partial Y'}}
- \left\langle  T^{+}v\,,\, \gamma^+w\right\rangle _{_{\partial Y'}}.
\ee
Take   $v =G$,  
where 
$G\star \equiv (L)^{-1}
$ and
$G(y,x)$ is the Green's function for the periodic 
problem $Av=\delta(x)$ and the second expression gives $w$.  The first and second boundary intergrals disappear, since the Green-function, $DG$, $w$ and $Dw$ satisfy  periodic  conditions.
\be
\label{2.5-2}
-w(x)+ \int\limits_{Y'}\,  G(x,y) \,Aw(y) \,dy=
0, \quad x\in Y'.
\ee
Replacing $Aw$ by the right-hand side of equation  $\eqref{strong}_2$,
we obtain
$$
w(x)=\left(G\star D^*g\right)(x)=\int_{Y'}G(x,y)D^*g(y)dy.
$$ 
Note that 
$$
Dw(x)=D\left(G\star D^*g\right)(x)=\int_{Y'}D_xG(x,y)\cdot D_x^*g(y)\,dy=\int_{Y'}D_y (D_yG(x,y)):g(y)\,dy.
$$
	\begin{lemma}
		Let $C_0 \in \R^{d\times d \times d \times d}$	be a constant symmetric tensor and $w\in H^2(Y')$ a scalar function, then $D^*(C_0w)= C_0: Dw.$
		\label{C0}
	\end{lemma}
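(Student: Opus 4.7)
The proof is a direct component-wise computation whose content is entirely the Leibniz rule combined with the fact that, because $C_0$ has constant entries, $\partial_m C_{ijkl}=0$ for every $m$. I would therefore first fix the index conventions carefully, interpreting the left-hand side via the definition of $D^*$ in \eqref{2.2} applied to the tensor $C_0 w$ (so that $D^*$ acts by $\partial_k\partial_l$ on the trailing pair of indices), and the right-hand side via the contraction $C_0:Dw$ with $Dw=\nabla\otimes\nabla w$ the Hessian from \eqref{2.2}. Once the types are pinned down the identity checks indexwise in one line.

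Concretely, writing out
\[
D^*(C_0 w)\big|_{ij} \;=\; \partial_k\partial_l\bigl(C_{ijkl}\, w\bigr),
\]
and applying the product rule twice, every term in which a derivative lands on $C_{ijkl}$ vanishes, leaving only $C_{ijkl}\,\partial_k\partial_l w$. Recognising $\partial_k\partial_l w=(Dw)_{kl}$ yields $C_{ijkl}(Dw)_{kl}=(C_0:Dw)_{ij}$, which is precisely the right-hand side. The full-symmetry assumption \eqref{coefbd} on $C$ is not needed for this step; only $y$-independence of $C_0$ is used.

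For $w$ merely in $H^2(Y')$ the second derivatives need not exist pointwise, so the identity is to be read either in the distributional sense (pair with a smooth periodic test function and integrate by parts twice, where constancy of $C_0$ again permits moving the coefficient across the integrals without generating boundary or commutator terms) or after approximating $w$ in $H^2_{per}(Y')$ by smooth periodic functions and passing to the limit in $L^2$, using that both sides are continuous in $w$ with respect to the $H^2\to L^2$ norm once paired with a test function. The only real ``obstacle'' here is notational rather than mathematical, namely matching the tensor types and the $:$-contraction pattern to the definitions of $D,D^*$ in \eqref{2.2}; once the types line up, the lemma is essentially one line and amounts to saying that constant coefficients commute with differentiation.
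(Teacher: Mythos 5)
Your proposal is correct and follows essentially the same route as the paper: the paper also invokes the Leibniz rule, writing $D^*(C_0w)=wD^*C_0 + C_0: Dw+ 2(\mathrm{div}\, C_0)\cdot \nabla w$ and observing that the two coefficient-derivative terms vanish because $C_0$ is constant. Your additional remarks about interpreting the identity distributionally (or by density) for $w\in H^2$ are a sound refinement of a point the paper leaves implicit, but they do not change the underlying argument.
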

	
	\begin{proof}
		Here we  use the following rule for a constant matrix $C_0$ and a scalar $w$: 
		$$
		D^*(C_0w)=wD^*C_0 + C_0: Dw+ 2(\mathrm{div} \, C_0)\cdot \nabla w; 
		$$
	\end{proof}
	\begin{equation}
	D^*(C_0:D w)=
	D^*D^*(C_0 w)= C_0::(DD w)= -D^*(g). 
	\label{511}
	\end{equation}

	\begin{remark}
		Note that a fundamental solution for an anisotropic  plate  exists and is known. E.g. a periodic fundamental solution can be found in a series. Anyway, it is possible to write a Newton-type potential, $G$, which will be inverse to the operator $C_0::DD[ \cdot]$.
	\end{remark}
%
%
%
	Let us check, that $G\star: H^{-2}(Y')\to H^2_{per[0]}(Y')$, i.e., that $\delta \in H^{-2}(\R^2)$.
	\begin{definition}
		We define the norm $\|u\|_{H^s(\R^d)}^2$ for $u\in H^s(\R^d)$ as
		\begin{align}
		\|u\|_{H^s(\R^d)}^2 = \int_{\R^d} (|\xi|^2 + 1)^s |\tilde{u}|^2 d\xi,
		\end{align}
		where $\tilde{u}$ is the Fourier transform.
	\end{definition}
	This definition can also be found on \cite[p. 76]{McLean}.
	\begin{lemma}\label{lemma_coroll}
		The Dirac function $\delta$ belongs to $H^s(\R^d)$ for any $s<\frac{-d}{2}$. 
	\end{lemma}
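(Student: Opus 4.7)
My plan is to use the definition of $\|\cdot\|_{H^s(\R^d)}$ via Fourier transform given just above the lemma. The key observation is that the Fourier transform of the Dirac delta is a constant function (specifically $\tilde{\delta}(\xi) = (2\pi)^{-d/2}$ in the symmetric convention, or $1$ in the other), so computing $\|\delta\|_{H^s(\R^d)}^2$ reduces to checking integrability of the weight $(|\xi|^2+1)^s$ on $\R^d$.

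First, I would substitute $\tilde{\delta} = \text{const}$ into the definition to get
\[
\|\delta\|_{H^s(\R^d)}^2 = c \int_{\R^d} (|\xi|^2+1)^s\, d\xi
\]
for some positive constant $c$ depending only on the Fourier convention. Then I would pass to spherical coordinates, reducing this to
\[
\|\delta\|_{H^s(\R^d)}^2 = c\, \omega_{d-1} \int_0^\infty (r^2+1)^s\, r^{d-1}\, dr,
\]
where $\omega_{d-1}$ is the surface area of the unit sphere in $\R^d$.

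Next, I would analyse convergence of this one-dimensional integral. Near $r=0$, the integrand is bounded since $(r^2+1)^s \to 1$ and $r^{d-1}$ is locally integrable for $d\ge 1$, so there is no issue at the origin. For large $r$, $(r^2+1)^s$ behaves like $r^{2s}$, so the integrand behaves like $r^{2s+d-1}$. Convergence at infinity therefore requires $2s+d-1 < -1$, i.e., exactly the hypothesis $s < -d/2$. Under this condition the integral is finite, hence $\delta \in H^s(\R^d)$.

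The proof is essentially a routine Fourier-analytic computation; the only thing worth care is making sure the constant from the Fourier convention is fixed consistently with the definition above the lemma, and noting explicitly that no boundary contribution at $r=0$ causes trouble. No real obstacle is expected.
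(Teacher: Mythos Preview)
Your proposal is correct and follows essentially the same approach as the paper: insert the constant Fourier transform of $\delta$ into the definition, pass to spherical (polar) coordinates, and check that the resulting radial integral $\int \rho^{2s+d-1}\,d\rho$ converges at infinity precisely when $s<-d/2$. The paper's proof is the same computation, just written more tersely (it only records the large-$\rho$ asymptotics and omits the check near $\rho=0$ that you spell out).
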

	\begin{proof}
	From the transformation formula for integrals it follows
		\begin{align}
		&\|\delta\|_{H^s(\R^d)}^2 = \int_{\R^d}\frac{(1 + |\xi|^2)^s}{(2\pi)^d} d\xi \sim \frac{1}{(2\pi)^n} \int_1^\infty \rho^{2s} \rho^{n-1} d\rho \nonumber \\
		&= \frac{1}{(2\pi)^n(2s + n)}  < \infty,\quad \textrm{if}\quad s<\frac{-d}{2}.
		\end{align}
	\end{proof}
	
	\begin{corollary}
		$\delta(x)\in H^{-2}(\R^2)$.
	\end{corollary}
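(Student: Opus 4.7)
The plan is essentially immediate: the corollary is a direct specialization of Lemma \ref{lemma_coroll}. I would apply that lemma with the specific choice $d=2$ and $s=-2$, and verify the hypothesis $s<-d/2$.

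Concretely, with $d=2$ the threshold $-d/2$ equals $-1$. Since $-2 < -1$, the exponent $s=-2$ lies in the admissible range of Lemma \ref{lemma_coroll}, so the lemma yields $\delta \in H^{-2}(\R^2)$ at once. I would also remark in passing that in this regime the integral
\begin{equation*}
\|\delta\|_{H^{-2}(\R^2)}^2 = \frac{1}{(2\pi)^2}\int_{\R^2}(1+|\xi|^2)^{-2}\,d\xi
\end{equation*}
is finite because in polar coordinates the integrand decays like $\rho^{-4}\cdot\rho$ at infinity, which is integrable near $\rho=\infty$ and harmless near $\rho=0$.

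There is no real obstacle here; the only thing to be careful about is the strict inequality $s<-d/2$ rather than $s\le -d/2$ (the case $s=-d/2$ fails because of the logarithmic divergence at infinity), which is why $s=-2$ works for $d=2$ but $s=-1$ would not. Thus the corollary follows with no additional estimates required beyond invoking the preceding lemma.
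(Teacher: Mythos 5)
Your proposal is correct and matches the paper's intent exactly: the corollary is stated immediately after Lemma \ref{lemma_coroll} precisely so that it follows by the specialization $d=2$, $s=-2$, and $-2<-1=-d/2$. Your added remark about the strictness of the inequality and the borderline failure at $s=-d/2$ is a sensible sanity check but not required.
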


\begin{remark}
	Following the first Green's Identity
	$$
	\left\langle  T^{+}w\,,\, \gamma^+v\right\rangle _{_{\partial Y'}}=
	\int\limits_{Y'}\big[\,  v\,Aw + c(w,v)\,\big]\,dy
	$$ and accounting for the fact, that $
	\left\langle  T^{+}w\,,\, \gamma^+v\right\rangle _{_{\partial Y'}}$ vanishes because of the periodicity, we can conclude that $vAw=-Lwv$.
	Because of Lemma \ref{inv}, $G\star \equiv (L)^{-1}$ exists for variable coefficients as well. We need to find a way to construct this Newton-type potential.
	\end{remark}

\subsection*{Potential polarization field}
	We refer to \cite{EM} and repeat the same approach applied to the second order elliptic PDE there to the higher order elliptic PDE with a symmetric coefficient matrix.\\
	The idea is to modify the problem, given in the variational formulation to the integral equation of the second kind, with the integral operator mapping in the same space.
	To solve the desired equations, we consider a related problem with a reference operator $L_0$, corresponding to the constant reference tensor $C_0$.
	\begin{equation}
	D^*(C_0:E(y))+D^*((C(y)-C_0):E(y))=0.
	\label{36}
	\end{equation}
Denoting 
$C(y)-C_0=\delta C(y)$ yields
	\begin{equation}
	-D^*(C_0:(E(y)-E_0)=D^*(\delta C(y):E(y)), \quad \textrm{or} 
	\label{366}
	\end{equation}
		\begin{equation}
		D^*(C_0:Dw(y))+D^*(\delta C(y):Dw(y))=-D^*(C_0:E_0).
			\label{367}
		\end{equation}
	If the polarization tensor $P=\delta C(y):E(y)$ were known,
	\begin{equation}
	D^*[C_0:Dw]= -D^*P,
	\label{5}
	\end{equation}
	the related problem could be solved using the periodic Green's operator. 
	\begin{lemma}
		Problem \eqref{5}
		has a periodic unique solution $w\in H^2_{per[0]}(Y')$,  
		which can be represented  by \eqref{5} through an integral operator
		$\Gamma \star: L^2(Y',\R^{d\times d})\to L^2(Y',\R^{d\times d})$ given by  the formula
		\begin{equation}
		[\Gamma\star P](y)\equiv -DL_0^{-1}D^*P(y), \quad \forall P\in L^2_{per}(Y',\R^{d\times d})
		\label{Pe}
		\end{equation}
		$$
		[\Gamma\star P](y)=\int_{Y'}D_y [D_xG(x,y)]:P(y)dy,
		$$ 
		$G(x,y)$ 
		is the Green's function for the periodic 4th-order problem 
		$$
		C_0::(D_yD_y G)=\delta(y-x),
		$$
		where $\delta(y-x)$ is the Dirac delta distribution.
		\label{lem31}
	\end{lemma}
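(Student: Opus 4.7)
The plan is to first establish existence, uniqueness and the $L_0^{-1}$ representation via Lax--Milgram, and then to pass from the operator representation to the integral-kernel representation by transferring derivatives onto the Green's function $G(x,y)$ using the periodicity of all boundary traces.

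First, I would verify that, for $P\in L^2_{per}(Y',\R^{d\times d})$, the right-hand side $-D^*P$ belongs to $H^{-2}(Y')$: by duality against $\varphi\in H^2_{per[0]}(Y')$ one has $\langle -D^*P,\varphi\rangle = -\langle P, D\varphi\rangle$, which is bounded by $\|P\|_{L^2}\|\varphi\|_{H^2}$. Since $C_0$ is constant, symmetric and coercive (it is to be chosen within the bounds \eqref{coefbd}), the operator $L_0 w := D^*(C_0:Dw)$ satisfies the hypotheses of Lemma~\ref{inv} applied to constant coefficients, so $L_0 : H^2_{per[0]}(Y')\to H^{-2}(Y')$ is an isomorphism with continuous inverse. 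Setting $w := -L_0^{-1}D^*P$ then gives the unique solution in $H^2_{per[0]}(Y')$ of \eqref{5}, and $Dw\in L^2_{per}(Y',\R^{d\times d})$, which immediately shows that the map $P\mapsto -DL_0^{-1}D^*P$ sends $L^2_{per}$ into $L^2_{per}$ as claimed.

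Next, to obtain the integral-kernel formula, I would invoke the Green's-function identity already derived in the previous subsection for constant coefficients. Taking $v(y)=G(x,y)$ in the second Green identity \eqref{2.5-2}, with $Av=\delta(\cdot-x)$ and $Aw=-D^*P$, the boundary pairings drop out by periodicity of $G$, $DG$, $w$ and $Dw$, which yields
\begin{equation*}
w(x) = -\int_{Y'} G(x,y)\, D^*P(y)\, dy.
\end{equation*}
Differentiating in $x$ and integrating by parts twice in $y$ (again, the periodic boundary contributions vanish because $G(x,\cdot)$, $D_yG(x,\cdot)$ and $P$ are $Y'$-periodic), I obtain
\begin{equation*}
Dw(x) = -\int_{Y'} D_y\bigl[D_yG(x,y)\bigr] : P(y)\, dy,
\end{equation*}
which is exactly $[\Gamma\star P](x)$ with the stated kernel and sign.

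The main obstacle I anticipate is making the Green's-function identity rigorous when $P$ is only in $L^2$, because the representation $w(x)=\int G(x,y)D^*P(y)\,dy$ is formally valid for smooth $P$. I would handle this by density: establish the formula first for $P\in C^\infty_{per}(\bar Y')$, where Lemma~\ref{lemma_coroll} and its corollary guarantee that $\delta\in H^{-2}$ and hence $G(x,\cdot)\in H^2_{per[0]}(Y'\setminus\{x\})$ with appropriate integrability near the singularity, and then extend to $P\in L^2_{per}$ by continuity of both sides: the left-hand side $Dw=-DL_0^{-1}D^*P$ is continuous $L^2\to L^2$ by the first part of the proof, and the right-hand side defines a bounded operator on $L^2$ because $D_y D_y G(x,\cdot)$ is the kernel of $-DL_0^{-1}D^*$ (a bounded operator by the same argument). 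Uniqueness of $w$ in $H^2_{per[0]}(Y')$ is immediate from Lemma~\ref{inv} applied to $L_0$.
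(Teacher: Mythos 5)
Your argument follows the same route the paper uses to justify this lemma (which is stated without a formal proof block): existence, uniqueness, and the $L_0^{-1}$ representation come from Lemma~\ref{inv} (Lax--Milgram applied to the constant-coefficient form $L_0$), and the kernel formula comes from the second Green's identity \eqref{2.5-2} with the boundary pairings dropping out by periodicity, exactly as derived in the preceding subsection. Your explicit chain $D\colon H^2\to L^2$, $L_0^{-1}\colon H^{-2}\to H^2_{per[0]}$, $D^*\colon L^2\to H^{-2}$ for the $L^2\to L^2$ mapping property and the closing density argument are more careful than what the paper spells out, but the underlying approach is the same.
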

	Now we can rewrite \eqref{367} as the following
\begin{equation}
E(y)-E_0=-\Gamma\star P(E), \quad \textrm{or} \quad E(y)+\Gamma\star \delta C(y):E(y)=E_0
\label{4}
\end{equation}
and
\begin{equation}
E(y) =(I-\Gamma\star \delta C(y))^{-1}E_0=:R\star E_0. 
\label{45}
\end{equation}
with 
\begin{equation}
\Gamma\star \delta C:=-DL_0^{-1}D^*\delta C, \quad [\Gamma \delta C](x,y):=D_y [D_xG(x,y)]\delta C(y)\in L^1(Y'\times Y')
\label{ink}
\end{equation}

\subsection{Orthogonal decomposition of the 4th-order differential operator on $Ker$ and $Im$ }

	Our problem reduces to \eqref{5} and  finding
	$$ P\equiv (C(y)-C_0):E(y) \in V_{pot[0]}(Y',\R^{d\times d}).$$
	\begin{remark} If $C_0$ would be the effective coefficients (see (3.11) in \cite{Pastukhova}), then
	$$<C(y):E(y)>=C_0:<E>.$$	
	\end{remark}
	Following ideas from \cite{Grab99}, \cite{Pastukhova}, \cite{Jikov}, we decompose the $L^2$-space into the solinoidal and potential in the sense of higher-order $\mathrm{div}^n$-free and $\mathrm{curl}^n$-free, or higher-order potential tensor spaces:\\
	$ V_{sol}(Y',\R^{d\times d})=\{P = \{g_{sh}\}_{s,h}\in L^2(Y',\R^{d\times d})\ \textrm{ be symmetric,}\ 
	 \textrm{s. t.}\quad
		D^*P = 0.\}$\\
			$ V_{pot}(Y',\R^{d\times d})= \left\{D\Gamma,\  \Gamma \in H^2_{per}(Y') \right\}. 
			$\\
According to \eqref{weak}, one has the orthogonality property:\\
$
V_{sol}(Y',\R^{d\times d} )=V_{pot}(Y',\R^{d\times d} )^\perp$\\
$
\quad =\{P \in  L^2(Y',\R^{d\times d} ):\ <P: e> = 0\quad  \forall e \in V_{pot}(Y',\R^{d\times d} )\}.
$\\
Define another potentilal field	\\					
		$ V_{pot[0]}^*(Y',\R^{d\times d})= \left\{P \in V_{sol}(Y',\R^{d\times d}) \  \textrm{ and} 
		\   \left<P\right> = 0.\ \ \exists\  \Gamma\in H^2_{per}(Y',\R^{d\times d\times d\times d}),\right.
		$\\ 
		$\left. \ \textrm{s.t.}\ D^*\Gamma = P, \right.
		\left. 
		\Gamma^{sh}
		_{ij}= \Gamma^{sh}
		_{ji} \quad \textrm{(sym)},\quad
		\Gamma^{sh}_{ij} = -\Gamma_{sh}^{ij} \quad \textrm{(skew-sym)}\right\}.
	$\\
$
V_{sol[0]}(Y',\R^{d\times d} )
=\{P \in  L^2(Y',\R^{d\times d} ):\ D^*P= 0,\quad  <P>=0\}.
$\\
$
V_{sol}(Y',\R^{d\times d} )=V_{sol[0]}(Y',\R^{d\times d} )\otimes \R^{d\times d}
$\\
$
V_{pot}(Y',\R^{d\times d} )=V_{pot[0]}(Y',\R^{d\times d} )\otimes \R^{d\times d}
$
\begin{lemma}
	For $g \in V_{sol,[0]}(Y')$   $\left<f,g\right> \equiv \int_{Y'} 
	f: g\, dy= 0$, $ \forall f \in V^*_{pot}(Y')$. 
\end{lemma}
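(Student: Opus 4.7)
The plan is to reduce the pairing $\langle f, g\rangle$ to an integral in which the solenoidal condition on $g$ can be triggered, by transporting the two divergences that sit on $f$ across onto $g$ using periodic integration by parts, and then unravelling the symmetries of the potential $\Gamma$.

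First I would invoke the defining representation $f = D^\ast \Gamma$ guaranteed by $f\in V^\ast_{pot}(Y')$, writing componentwise $f_{ij}=\partial_s\partial_h \Gamma^{sh}_{ij}$ for some $\Gamma\in H^2_{per}(Y',\R^{d\times d\times d\times d})$ with the required sym/skew-sym structure. Since $g\in L^2$ only, I would first mollify or use density of smooth periodic tensors in $V_{sol,[0]}(Y')$ so that a twofold integration by parts is legitimate; periodicity kills every boundary term and gives
\[
\int_{Y'} f : g \, dy \;=\; \int_{Y'} \Gamma^{sh}_{ij}\,\partial_s\partial_h g_{ij}\,dy.
\]

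The decisive step is to combine the two symmetry properties of $\Gamma$ with the commutativity of partials. The skew-symmetry under pair swap $\Gamma^{sh}_{ij}=-\Gamma^{ij}_{sh}$ together with the relabelling of dummy indices $s\leftrightarrow i,\ h\leftrightarrow j$ reshuffles $\partial_s\partial_h g_{ij}$ into $-\partial_i\partial_j g_{sh}$, while the symmetry $\Gamma^{sh}_{ij}=\Gamma^{sh}_{ji}$ lets me symmetrize freely in $(i,j)$ so that the contraction pattern aligns with $D^\ast g$. What remains after this manipulation is the product of $\Gamma$ against the operator $D^\ast g = \partial_i\partial_j g_{ij}$, which vanishes since $g\in V_{sol}(Y')$. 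The zero-mean conditions $\langle g\rangle=0$ and $\langle P\rangle=0$ built into $V_{sol,[0]}$ and $V^\ast_{pot[0]}$ serve to annihilate the constant (polynomial) parts of $\Gamma$ and $g$ that do not interact with the differential structure, sealing the argument.

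The main technical obstacle is precisely this index bookkeeping: one has to verify that, after using the skew-symmetry and relabelling, the surviving differential operator acting on $g$ is \emph{exactly} the contraction $\partial_i\partial_j g_{ij}$ rather than some unrelated second-order combination of partials of $g$. This is where the interplay between the sym-in-lower-pair and the skew-under-pair-swap of $\Gamma$ has to be exploited in a non-trivial way; once this identification is made, the conclusion $\langle f,g\rangle=0$ is immediate from $D^\ast g=0$ and $\langle g\rangle=0$.
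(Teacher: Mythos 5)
Your proof is a mirror image of the paper's, with the roles of $f$ and $g$ swapped, and the swap introduces a genuine gap at the decisive step. The paper represents the \emph{solenoidal} field $g$ by the skew-symmetric potential $\Gamma$, $D^*_{ij}\Gamma^{ij}_{sh}=g_{sh}$ (this is exactly what the Fourier construction in the paper delivers), integrates by parts to move the two derivatives onto $f$, and then kills the integral $\int\Gamma^{ij}_{sh}\,\partial_i\partial_j f_{sh}$ by pairing the skew-symmetry of $\Gamma$ under the index-pair exchange $(i,j)\leftrightarrow(s,h)$ against the full symmetry of $\partial_i\partial_j f_{sh}=\partial_i\partial_j\partial_s\partial_h\varphi$ in all four indices, which is where the potential structure of $f$ enters. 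You instead take the skew potential of $f$, transfer the derivatives onto $g$, and are then left with $\int\Gamma^{sh}_{ij}\,\partial_s\partial_h g_{ij}\,dy$.

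The problem is your claim that this ``aligns with $D^*g$.'' It does not: the derivative indices $(s,h)$ are contracted against the upper indices of $\Gamma$, while the tensor indices of $g$ are $(i,j)$, and there is no relabelling that merges them into the scalar contraction $D^*g=\partial_i\partial_j g_{ij}$. Using the skew-symmetry $\Gamma^{sh}_{ij}=-\Gamma^{ij}_{sh}$ only removes the part of $\partial_s\partial_h g_{ij}$ that is symmetric under the pair swap $(s,h)\leftrightarrow(i,j)$; what survives is the antisymmetric remainder
\begin{equation*}
\tfrac12\bigl(\partial_s\partial_h g_{ij}-\partial_i\partial_j g_{sh}\bigr),
\end{equation*}
and this does not vanish for a general solenoidal $g$. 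It vanishes exactly when $g_{ij}=\partial_i\partial_j\psi$ for some scalar $\psi$, i.e.\ when $g\in V_{pot}$ — which contradicts your hypothesis $g\in V_{sol,[0]}$. So the step you flagged as ``the main technical obstacle'' is indeed where the argument breaks, and it cannot be repaired by more careful bookkeeping; you must, as the paper does, put the skew potential on $g$ and exploit that $\partial_i\partial_j f_{sh}$ is a fourth derivative of a single scalar and therefore invariant under the pair swap.
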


\begin{proof}
	Let $\Gamma$
	be a solution of the periodic boundary value problems for the
	equations\\
	$D^*\Gamma^{ij}_{sh} =  g_{sh}$,  
	$D^*  g=0$.\\  
	Then
	\begin{eqnarray*}
		\int_{Y'} D_{ij}^*\Gamma^{ij}_{sh}  
		f_{sh} \,dy 
=\int_{Y'} \Gamma^{ij}_{sh}   D_{ij} f_{sh}\,dy \stackrel{\Gamma^{ij}_{sh} \textrm{skew sym.}}{=}0. 
	\end{eqnarray*}
\end{proof}

I.e., the following Weyl's decomposition (see \cite{Jikov}), known for the theory of second order elliptic operators, should be valid also for the higher-order operators:  
\begin{align*}
L^2(Y')&=V_{sol[0]}(Y',\R^{d\times d} )\otimes V_{pot[0]}(Y',\R^{d\times d} )\otimes \R^{d\times d} \\
&=V_{pot[0]}(Y',\R^{d\times d} )\otimes V_{sol} (Y',\R^{d\times d} ).
\end{align*}
%
%
%
%
%

We describe the way to construct a Newton potential. Let us
	take the Fourier series expansion 
	$$
	g_{ij}(y)= \sum_{0\neq n \in \Z^d}
	g^{ij}_n e^{2\pi n\cdot y\sqrt{-1}}.
	$$
	Then the condition $D^*P = 0$ means that 
	\begin{equation}
		g^{ij}_n n_i n_j = 0\quad  \textrm{for all} \quad n.
		\label{316}
	\end{equation}
	We introduce the matrix $\Gamma^{sh} = \{\Gamma^{sh}_{ ij} \}_{i,j}$ with the Fourier series coefficients 
	$$
	\Gamma^{sh}_{ ij,\ n} =(-g^{ij}_n n_s n_h + g^{sh}_n n_i n_j)|n|^{-4}(-4\pi^2)^{-1}.
	$$ 

	%
	%
	%
	
	The $n$-th term of the Fourier series satisfies 
	\begin{eqnarray}
		&& D^*[\Gamma^{sh}_n e^{2\pi n\cdot y\sqrt{-1}}]= \frac{\partial^2}{\partial y_i \partial y_j}
		\Gamma^{sh}_{ij,n}e^{2\pi n\cdot y{\sqrt{-1}}} \nonumber \\
		&&	= e^{2\pi n\cdot y\sqrt{-1}}|n|^{-4}(-g^{ij}_n n_s n_h n_i n_j + g^{sh}_n n_i n_j n_i n_j)=e^{2\pi n\cdot y\sqrt{-1}}g^{sh}_n.
	\end{eqnarray}
	Here relation \eqref{316} and the identity $n_i  n_j n_i n_j = |n|^4$ were taken into account. 

\subsection{Periodic fundamental solution of the biharmonic equation}

Assume now that the tensor $C_0$ acts on a matrix $\xi$ as $C_0\xi  = \lambda_0 C_0(\textrm{Tr} \, \xi)I$, where $\textrm{Tr} \, \xi = \xi_{ii}$ is the
trace of the matrix $\xi$, $I$ is the identity matrix, and $\lambda_0$ is a scalar. Obviously, $C_0\xi\cdot \xi =
\lambda_0(\textrm{Tr} \, \xi)^2$, and the matrix $\xi = D\varphi$ satisfies $\textrm{Tr} \, D\varphi = \Delta \varphi$, $C_0D\varphi \cdot D\varphi = \lambda_0 \Delta \varphi \Delta \varphi$. \\
Let $E=E_0+Dw$ be the bending deformations of the plate, or the curvature.
$$
E_{n+1}(y) =\Gamma \star [(I-\lambda^{-1}_{0}C(y)):E_n(y)],\quad n=0,1,2,3,...,
$$
where
$$
[\Gamma\star E](x)=\int_{Y'}D_y (D_x G(x,y)):E(y)dy,
$$ 
$G(x,y)$ 
is the Green's function for the periodic biharmonic problem, i.e. for each fixed $x\in Y'$ the function $G(x,y)$ satisfies the equation 
\begin{equation}
\Delta^2_y G=\delta(y-x),
\label{biharm}
\end{equation}
where $\delta(y-x)$ denotes the Dirac measure.\\ 
We redefine here $g=-\lambda_0^{-1}g$. Let us denote by $(\Delta^2)^{-1}D^*g$ for $g\in (L^2(Y'))^{d\times d}$ a solution of the periodic problem 
\begin{equation}
\Delta^2 w=-D^*g,\quad v\in H_{per[0]}^2(Y'),
\label{Delta}
\end{equation}
which exists due to Lemma \ref{inv}.\\
%
We can construct a periodic fundamental solution in the following way.
We	take the Fourier series expansion 
	$
	G(y)= \sum_{n \in \Z^d}
	c_n(G) e^{2\pi n\cdot y\sqrt{-1}},
	$
	where the Fourier coefficients
	$$
	c_n(G)=\int_{Y'}G(x)e^{2\pi n\cdot y\sqrt{-1}}dy, \ \ n \in \Z^d.
	$$
	The Fourier series for the Dirac sequence has coefficients $c_n(\delta)=1$:
	$
	\delta(y)=\sum_{0\neq n \in \Z^d}
	e^{2\pi n\cdot y\sqrt{-1}}.
	$
	The $n$-th term of the Fourier series satisfies 
	\begin{eqnarray}
	&&\Delta^2[c_n(G) e^{2\pi n\cdot y\sqrt{-1}}]\equiv D^*D[c_n(G) e^{2\pi n\cdot y\sqrt{-1}}]
	\nonumber \\
	&&
	= 
	\sum_{i=1}^d \sum_{j=1}^d\frac{\partial^2}{\partial y_i \partial y_j}
	\left(	\frac{\partial^2}{\partial y_i \partial y_j}
	c_n(G)e^{2\pi n\cdot y{\sqrt{-1}}}\vec{e}_i\otimes \vec{e}_j\right) \nonumber \\
	&&	= - e^{2\pi n\cdot y\sqrt{-1}}(2\pi)^{4}c_n(G) 
	(n_i n_j n_i n_j) .
	\end{eqnarray}
	Here the identity $n_i  n_j n_i n_j = |n|^4$ is taken into account. 
	$$
	c_n(G)=-(2\pi)^{-4}|n|^{-4}
	$$
	\begin{equation}
	G(y)=-(2\pi)^{-4}\sum_{0\neq n \in \Z^d}
	|n|^{-4} e^{2\pi n\cdot y\sqrt{-1}}.
	\label{perGreen}
	\end{equation}
\begin{remark}
According to \cite{Willis} (see Appendix), if take $Y'$ such that it will contain many periodicity cells,
it is possible to take the fundamental solution and do not look for the periodic Green's function. Since we require that the right-hand side of the equation has a zero mean-value over $Y'$,  the boundary values oscillate around zero and, according to the St.Veinant hypothesis, the corresponding boundary layer decays exponentially. Its thickness is proportional to the period of the structure (or characteristic size of inclusions related to the distance between them) (see, e.g., \cite{Toupin}).
\end{remark}

%
%

\section{Neumann series and its convergence estimate by spectral properties}

	\begin{definition}
		Let ${\cal A}$ be a Banach-algebra with unit, $\Gamma\in {\cal A}$ and $\sigma(\Gamma\star)$ be the spectrum of $\Gamma\star$.
		$\rho(\Gamma\star )=\sup\{|\lambda| \hspace{2mm} | \lambda \in \sigma(\Gamma\star)\}$ 
		is the  spectral radius of $\Gamma\star $.
		\label{defspecrad} 
	\end{definition}

	The class of kernels of integral 
	operators  on the compact set 
	forms a Banach algebra with product $\star$ without a unit. 
	We can add  the identitity as a unit to our algebra ${\cal A}$.

	\begin{definition}
		Let $Y'\subset \R^d$ be a compact set, and $\Gamma\star\delta C \in {\cal L}(L^2(Y'))$. 
		Then 
		there exists a resolvent $\underline{\tilde{R}}(\lambda)
		$ of $\Gamma\star$
		in ${\cal L}(L^2(Y')))$, given by 
		\begin{equation}
		\underline{\tilde{R}}(\lambda):=(\lambda I -\Gamma\star \delta C)^{-1}=\sum_{j=0}^\infty \frac{(\Gamma\star\delta C)^j}{\lambda^{j+1}}, \quad \forall \lambda \in \C\setminus\sigma(\Gamma\star \delta C).
		\label{Neum}
		\end{equation} 
		\label{specrad}
	\end{definition}
	%
%
%
%
%
%
%
%
%
%
In order for solution to \eqref{4} to exist, i.e., 
		\eqref{Neum}
		the Neumann series to converge for $\lambda =1$, the spectral radius should be $\rho(\Gamma \star \delta C)<1$.	And this depends on the choice of the constant coefficient matrix $C_0$. We postpone this question to the next section and discuss  properties of the resolvent kernel now. 
	We recall further properties of the resolvent and refer to \cite[Corol. 5.5.1.]{pseu}. 
	\begin{lemma}
		Let $R\star$ be an integral operator and $R$ be its kernel and $c_n(R)$ ($n\in \Z^d$) its Fourier coefficients.  If $|c_n(R)|\le c|n|^\alpha$  with some $\alpha\in \R$, then $R\star \in
		{\cal L}(H^{\lambda}, H^{\lambda-\alpha})$ for any $\lambda \in \R$. Moreover, if $c_1|n|^\alpha \le |c_n(R)| \le c_2|n|^\alpha$ ($n \in \Z^d$) 
		with some positive constants $c_1$ and $c_2$, then $R\star$ builds an isomorphism between
		$H^{\lambda}$ and $H^{\lambda-\alpha}$ for any $\lambda \in \R$.
	\end{lemma}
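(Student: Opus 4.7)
The plan is to recognise that convolution with $R$ is, on the Fourier side, multiplication by the sequence $\{c_n(R)\}_{n\in\Z^d}$, so that $R\star$ is nothing but a Fourier multiplier with symbol $c_n(R)$: indeed $c_n(R\star u) = c_n(R)\,c_n(u)$ (up to a volume factor absorbed into the constants). Combined with the standard Fourier-series characterisation of the periodic Sobolev norm,
\[
\|u\|_{H^\lambda(Y')}^2 \; \sim \; \sum_{n\in\Z^d}(1+|n|^2)^\lambda\,|c_n(u)|^2
\]
(the periodic analogue of the $\R^d$-definition given just before Lemma~\ref{lemma_coroll}), both claims reduce to elementary weighted-$\ell^2$ estimates on sequences of Fourier coefficients.

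For the continuity part I would write
\[
\|R\star u\|_{H^{\lambda-\alpha}}^2 \; \sim \; \sum_{n\in\Z^d}(1+|n|^2)^{\lambda-\alpha}\,|c_n(R)|^2\,|c_n(u)|^2
\]
and insert the hypothesis $|c_n(R)|^2\le c^2|n|^{2\alpha}$. Since $|n|\ge 1$ whenever $n\in\Z^d\setminus\{0\}$, the elementary two-sided bound $\tfrac12(1+|n|^2)\le|n|^2\le 1+|n|^2$ holds and, raised to the $\alpha$-power (regardless of the sign of $\alpha$), yields $|n|^{2\alpha}\le 2^{|\alpha|}(1+|n|^2)^\alpha$. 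Substituting this dominates the summand by $C(1+|n|^2)^\lambda|c_n(u)|^2$ and gives $\|R\star u\|_{H^{\lambda-\alpha}}\le C\|u\|_{H^\lambda}$ at once.

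For the isomorphism part, under the two-sided symbol bound $c_1|n|^\alpha\le|c_n(R)|\le c_2|n|^\alpha$ I would exhibit the inverse as the Fourier multiplier with symbol $1/c_n(R)$; the lower bound gives $|1/c_n(R)|\le c_1^{-1}|n|^{-\alpha}$, and the same weighted-$\ell^2$ argument shows that this inverse multiplier maps $H^{\lambda-\alpha}$ boundedly back into $H^\lambda$. The one genuinely delicate point, and the main obstacle, is the zero frequency $n=0$: the upper bound $|c_n(R)|\le c|n|^\alpha$ is void when $\alpha<0$ and the lower bound fails when $\alpha>0$, so strictly speaking the statement is to be read on the subspace of mean-zero periodic functions. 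This restriction is natural here, since throughout the paper we work in $H^2_{per[0]}(Y')$; once the constant mode is removed, the multiplier and its reciprocal are both honest bounded operators between the corresponding mean-zero Sobolev scales, yielding the isomorphism.
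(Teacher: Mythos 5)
Your proof is correct, and it is worth noting up front that the paper itself does not prove this lemma: it simply cites it as Corollary~5.5.1 of Saranen and Vainikko's book on periodic integral and pseudodifferential equations. So there is no ``paper proof'' to compare against, only a reference; you have in effect supplied the standard proof that the cited source gives. Your route is exactly the one used there: identify $R\star$ as a Fourier multiplier with symbol $c_n(R)$, use the Fourier-series characterisation $\|u\|_{H^\lambda}^2\sim\sum_n(1+|n|^2)^\lambda|c_n(u)|^2$, and reduce both claims to the elementary inequality $|n|^{2\alpha}\le 2^{|\alpha|}(1+|n|^2)^\alpha$ for $n\ne 0$. Your isomorphism argument via the reciprocal symbol $1/c_n(R)$ is also the standard one and is correct.

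The one point you flag as delicate is indeed the essential caveat, and you handle it properly: the two-sided bound $c_1|n|^\alpha\le|c_n(R)|\le c_2|n|^\alpha$ cannot hold at $n=0$ unless $\alpha=0$, so the lemma must be read either on the mean-zero subspaces $H^\lambda_{per[0]}$ or with the zero mode treated separately. This is consistent with how the lemma is used in the paper (for $G$ with $c_0(G)=0$ on $H^2_{per[0]}$), and it is also how Saranen--Vainikko phrase the precise hypotheses in their Corollary~5.5.1. In short, your argument is sound and self-contained, and it fills in a proof the paper delegates to the literature.
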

	
	\begin{lemma}
		Let  $R$ be a resolvent kernel to $G$ and $c_n(R)$ ($n\in \Z^d$) its  Fourier coefficients.  $|c_n(R)|\le c|n|^{-4}$  and $R \in L^1(Y',
		{\cal L}(H^{-2}, H^{2}))$. That means, the resolvent kernel is weakly singular and has the same leading singularity as $G$ (the fundamental solution of the 4th order PDE with  constant coefficients).  The last statement is true for the kernels' pare $\tilde{R}$ and $\Gamma$ also.
	\end{lemma}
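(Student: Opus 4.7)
The plan is to expand $R$ as a Neumann series around the constant-coefficient Green kernel $G$ and transfer the explicit Fourier decay $|c_n(G)|=(2\pi)^{-4}|n|^{-4}$ from \eqref{perGreen} to the series sum by a contraction argument. Specifically, I would write $L=L_0+\delta L$ with $\delta L\,w:=D^{*}(\delta C\,Dw)$, so that Lemma \ref{inv} together with the spectral-radius condition $\rho(\Gamma\star\delta C)<1$ (cf.\ Definitions \ref{defspecrad}--\ref{specrad}) yields the absolutely convergent expansion
\[
R\star \;=\; L^{-1} \;=\; \sum_{j=0}^{\infty} L_0^{-1}(-\delta L\,L_0^{-1})^{j},
\]
whose $j=0$ term is exactly the convolution with $G$.

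For the coefficient bound I would estimate the iterates $j\ge 1$ on the operator level rather than in Fourier. Each factor $L_0^{-1}$ gains four Sobolev orders (by the preceding lemma applied with $\alpha=-4$), while $\delta L$ loses only two, and the bounded multiplier $\delta C\in L^{\infty}$ acts continuously on $H^{s}$ for $s\in[0,2]$ by interpolation. Hence every iterate maps $H^{-2}$ into $H^{2}$ with norm controlled by $q^{j}\|L_0^{-1}\|$, where $q:=\|\delta L\,L_0^{-1}\|_{H^{2}\to H^{2}}=\rho(\Gamma\star\delta C)<1$. Summing the geometric series gives $R\star\in\mathcal L(H^{-2},H^{2})$, and applying the converse half of the preceding lemma to the resulting isomorphism $R\star:H^{-2}\to H^{2}$ forces $|c_n(R)|\le c|n|^{-4}$.

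The iterates for $j\ge 1$ factor through two copies of $L_0^{-1}$ and therefore gain at least eight Sobolev orders, so their kernels are continuous across the diagonal; only the $j=0$ summand contributes a distributional on-diagonal singularity, which establishes the claim that $R$ has the same leading singularity as $G$. Since that singularity is $|n|^{-4}$ in Fourier and thus weakly singular (locally integrable for $d\le 3$), the property $R\in L^{1}(Y',\mathcal L(H^{-2},H^{2}))$ follows. The same expansion applied to $\tilde R=(I-\Gamma\star\delta C)^{-1}$ reproduces the statement for the kernel pair $(\tilde R,\Gamma)$; the Fourier order of $\Gamma=-DL_0^{-1}D^{*}$ is two smaller than that of $G$ because of the outer differentiations, so the natural exponent there is $\alpha=-2$ rather than $-4$, but the contraction argument is identical.

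The main obstacle is that $\delta C\in L^{\infty}$ is not a well-behaved Fourier multiplier, so the estimate $|c_n(R)|\le c|n|^{-4}$ cannot be obtained by a termwise convolution of Fourier series. The argument must go through the operator norms and the full strength of the Fourier-isomorphism lemma: one uses the four-order smoothing of $L_0^{-1}$ to beat the two-order loss of $\delta L$, and only at the end reads off the kernel decay from the resulting isomorphism. Choosing $C_0$ so as to make $q<1$ sharp (as discussed in the next section) is precisely what makes the contraction work.
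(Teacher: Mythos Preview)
Your proposal misidentifies the object $R$. In the paper, ``resolvent kernel to $G$'' means the classical resolvent kernel of the integral operator with kernel $G$, i.e.\ the kernel satisfying the resolvent equation $R + G\star R = R + R\star G = G$ (this is the definition cited from \cite{Gri}). Since $G$ is the periodic Green kernel of the \emph{constant-coefficient} operator $\lambda_0\Delta^2$, it is a pure convolution, and so is its resolvent $R$. The paper's proof is then a direct Fourier computation: pass the resolvent equation to Fourier coefficients to get $(1+c_n(G))\,c_n(R)=c_n(G)$, and since $c_n(G)=-(2\pi)^{-4}|n|^{-4}\to 0$, the geometric expansion $|c_n(R)|=\sum_{l\ge 1}\big((2\pi)^{-4}|n|^{-4}\big)^l$ yields the claimed decay $|c_n(R)|\le c|n|^{-4}$ immediately.

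Your route instead sets $R\star=L^{-1}$ for the \emph{variable-coefficient} operator and expands in powers of $\delta L$. Even granting the convergence of that Neumann series, the step ``apply the converse half of the preceding lemma'' fails on two counts. First, no converse is stated: the preceding lemma goes from coefficient bounds to mapping properties, not the other way. Second, and more fundamentally, that lemma concerns Fourier multipliers, i.e.\ convolution operators described by a single sequence $c_n$. A variable-coefficient inverse is not a convolution, so there is no Fourier-coefficient sequence $c_n(R)$ in the sense used there, and $H^{-2}\to H^{2}$ boundedness by itself cannot be translated into $|c_n|\le c|n|^{-4}$. The same obstruction applies to your treatment of $(\tilde R,\Gamma)$. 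In short, your contraction argument establishes mapping properties but cannot deliver the Fourier-coefficient bound; the paper obtains that bound directly from the algebraic resolvent identity for the constant-coefficient kernel in Fourier space.
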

	\begin{proof}
		Owing to \cite[Definition 9.3.1]{Gri}, 
		the resolvent kernel $R$ of kernel $G$ satisfies the following equality: 
		\begin{equation}
		R+G\star R=R+R\star G=G.
		\label{eqres}
		\end{equation}
		In order to check that $|c_n(R)|\le c|n|^{-4}$,	we again apply the Fourier series technique, replacing the Fourier transform for the peiodic functions. According to \cite{pseu}, the coefficients of the series, corresponding to a product of two functions correspond to the product of their coefficients.  Then
		$
		(I+c_n(G))c(R)=c_n(G)
		$, equivalently (see \cite[Th.2.8]{Gri}) $(I+c_n(G))(I-c(R))=I$. Recall that
		$c_n(G) =-(2\pi)^{-4}|n|^{-4}.$ Hence, we can see by the developing in the series
		$$
		|c(R)|= \sum_{l=1}^{\infty}((2\pi)^{-4}|n|^{-4})^l .
		$$
		Furthermore, the expression \eqref{Neum} of the resolvent by the multiple multiplication of $G\star$  
		maps into the same algebra and the same space. 
	\end{proof}	
\begin{theorem}
If for $\Gamma \star \delta C$ defined by \eqref{ink}
$\rho( \Gamma \star \delta C)<1$, 	
the unique solution of problem \eqref{367}, $w\in H^2_{per[0]}(Y')$, exists
%
%
and satisfies the estimate
\begin{eqnarray}
\| w\|_{H^2(Y')} \le 
\sup_{j\in \N}\left\|\left(D_y D_xG \star\right)^j\right\|_{L^1(Y'\times Y')} \sum_{j=0}^\infty \left\|\frac{\delta C}
{\alpha(C_0)}\right\|^j_{(L^\infty(Y'))^{d\times d}}
{\|E_0\|_{L^2(Y')}
}
,
\label{kbd}
\end{eqnarray}
where $G$ is the periodic fundamental solution of the $4th$-order PDE with constant coeficients $C_0$.
\end{theorem}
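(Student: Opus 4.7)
The plan is to combine the abstract Neumann-series existence result from Definition \ref{specrad} with the kernel representation \eqref{ink} of $\Gamma\star$ and the coercivity constant from Lemma \ref{inv}, then convert the $L^{2}$-bound on the curvature $E$ into an $H^{2}$-bound on $w$ through the equivalent norm \eqref{poinc}.

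First I would invoke the assumption $\rho(\Gamma\star\delta C)<1$ together with \eqref{Neum} at $\lambda=1$ to conclude that $(I-\Gamma\star\delta C)^{-1}=\sum_{j\ge 0}(\Gamma\star\delta C)^{j}$ converges in $\mathcal{L}(L^{2}(Y',\R^{d\times d}))$. Applying this inverse to the constant tensor $E_{0}\in L^{2}(Y',\R^{d\times d})$ yields $E=\sum_{j\ge 0}(\Gamma\star\delta C)^{j}E_{0}$ as in \eqref{45}, which solves \eqref{4}; uniqueness is immediate from invertibility. Since $\Gamma\star$ maps $L^{2}$ into $V_{pot[0]}$ by Lemma \ref{lem31}, the difference $E-E_{0}$ lies in $V_{pot[0]}(Y',\R^{d\times d})$, and the Weyl decomposition recovers a unique $w\in H^{2}_{per[0]}(Y')$ with $Dw=E-E_{0}$, as required by \eqref{367}.

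Next I would prove the norm bound. Passing the $L^{2}$-norm inside the Neumann series gives
\[
\|E\|_{L^{2}(Y')}\le \sum_{j=0}^{\infty}\bigl\|(\Gamma\star\delta C)^{j}E_{0}\bigr\|_{L^{2}(Y')}.
\]
For each summand, unfold the kernel representation \eqref{ink} $j$ times: the composition becomes an iterated integral with kernel $(D_{y}D_{x}G\star)^{j}$ alternating with the pointwise multiplier $\delta C(y)$. I would pull out one factor of $\|\delta C\|_{(L^{\infty})^{d\times d}}$ for every $\delta C$ that appears. The ellipticity cost $\alpha(C_{0})^{-1}$ enters through Lemma \ref{inv} applied to the constant-coefficient operator $L_{0}$, which gives $\|L_{0}^{-1}\|_{\mathcal{L}(H^{-2},H^{2}_{per[0]})}\le\alpha(C_{0})^{-1}$; hence $\Gamma\star=-DL_{0}^{-1}D^{*}$ attaches one factor of $\alpha(C_{0})^{-1}$ per application. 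The remaining purely geometric factor is the $L^{1}(Y'\times Y')$-norm of the iterated Green kernel, uniformly dominated by $\sup_{j\in\N}\|(D_{y}D_{x}G\star)^{j}\|_{L^{1}(Y'\times Y')}$, which is finite under the spectral assumption. Collecting these estimates yields the summand bound $\sup_{j}\|(D_{y}D_{x}G\star)^{j}\|_{L^{1}}(\|\delta C\|/\alpha(C_{0}))^{j}\|E_{0}\|_{L^{2}}$, and summation in $j$ reproduces the series in \eqref{kbd}. Finally I would transfer the bound from $E$ to $w$ via \eqref{poinc}: $\|w\|_{H^{2}(Y')}\sim\|Dw\|_{L^{2}(Y')}=\|E-E_{0}\|_{L^{2}(Y')}$, and the triangle inequality absorbs $\|E_{0}\|_{L^{2}}$ into the $j=0$ term of the series.

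I expect the main obstacle to be the clean factorization of $\|(\Gamma\star\delta C)^{j}\|_{L^{2}\to L^{2}}$ into the two asserted pieces without double counting: each application of $\Gamma\star$ simultaneously carries a kernel contribution (feeding the $L^{1}$ factor) and the ellipticity cost $\alpha(C_{0})^{-1}$, while each multiplication by $\delta C$ contributes one $L^{\infty}$ factor, so care is needed to assign these unambiguously in the iterated convolution. The uniform finiteness of $\sup_{j}\|(D_{y}D_{x}G\star)^{j}\|_{L^{1}(Y'\times Y')}$ should follow from the convergence of the Neumann series itself, since $\rho<1$ forces the iterated operator norms to remain bounded.
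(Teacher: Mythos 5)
Your plan matches the paper's own (extremely terse) proof, which just cites three ingredients: the equivalent $H^{2}$-norm \eqref{poinc}, the Neumann-series representation of the resolvent \eqref{Neum}, and the Lax-Milgram bound from Lemma \ref{inv}. You invoke exactly these, and your detailed steps (convergent series $E=\sum_{j\ge0}(\Gamma\star\delta C)^{j}E_{0}$, kernel unfolding, transfer to $w$ via $\|w\|_{H^{2}}\sim\|Dw\|_{L^{2}}$) are a reasonable fleshing-out of what the paper leaves implicit. One caution: your closing heuristic that $\sup_{j}\|(D_{y}D_{x}G\star)^{j}\|_{L^{1}(Y'\times Y')}<\infty$ ``should follow from'' $\rho(\Gamma\star\delta C)<1$ does not actually follow, since the spectral radius constrains the composite operator $\Gamma\star\delta C$, not the bare iterated Green kernel stripped of the $\delta C$ factors; the two can behave very differently when $\delta C$ has small support or small magnitude. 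That finiteness is implicitly assumed by the theorem's statement rather than derived, and the paper's one-sentence proof does not address it either, so this is not a gap relative to the paper's argument, but you should not present it as a consequence of the spectral hypothesis.
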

\begin{proof}
	This estimate is based  
	on \eqref{poinc}, estimate of the resolvent, given by the Neumann series \eqref{Neum} and 
	the Lax-Milgram theorem. 
\end{proof}
	In recent works \cite{Suslina}, \cite{Niu}, \cite{Shen},  also the leading term asymptotics for the resolvent were discussed. We note, that our results and techniques can also be generalized for high-order PDEs.

\section{Bounds on $C_0(x)$}
	
	As we mentioned above, the Neumann series for problem \eqref{45} converges for $\lambda =1$, if the spectral radius  $\rho( \Gamma \star \delta C)<1$.	And this depends on the choice of the constant coefficient matrix $C_0$.\\	
	Let $\lambda$ and $E_\lambda$, be
	an eigenvalue and a corresponding eigenvector of 
	\begin{eqnarray*}
	\lambda E_\lambda =- 
	\Gamma\star  ((C(x)-C_0) : E_\lambda), \ \  
	(\lambda C_0+(C(x)-C_0)) : E_\lambda =0, \ \  
	C'(x) : E_\lambda =0,
\end{eqnarray*}
	where $C'(x)=C(x)-(1-\lambda)C_0$.
	We need, that for $|\lambda|\ge 1$ the last problem has only trivial solution, i.e.
	$$
	\xi : C' : \xi>0,\quad \textrm{or}\quad \xi :  C' : \xi<0,\quad \forall \xi \in \R^{d\times d}
	$$
%
%
%
%
%
	In order for $\lambda$ to be a non-trivial eigenvalue,
	$C'$ cannot be either positive definite or negative definite. 
	One must have, for the eigenvalues of the matrix
	$$\exists x_1;\  \mu'_i(x_1)< 0, \quad \textrm{and} \quad \exists x_2; \ \mu'_i(x_2)> 0, \ \forall i=1,...,d.
	$$
	In other words:
	\begin{eqnarray}
	\min_i\left\{\min_x\left(1-\frac{\mu_i(x)}{\mu_{i0}}\right)\right\} \le
	\lambda \le \max_i\left\{\max_x\left(1-\frac{\mu_i(x)}{\mu_{i0}}\right)\right\}. 
	\label{21}
	\end{eqnarray}

	A sufficient condition for the scheme to converge is $\rho<1$, i.e., $|\lambda|<1$, hence
	$\mu_{i0}>
	\mu_{i}(x)/2>0$, $i=1,...d$.
	Under this constraint, we look for $\mu_{i0}$ giving bounds in \eqref{21} with opposite sign
	and equal absolute value. This is ensured by
	\begin{equation}
	\mu_{i0} =\frac{1}{2}(\min_x \mu_{i}(x) + \max_x \mu_{i}(x)).
	\end{equation}
	With these parameters, the spectral radius of $\Gamma \star \delta C$ is bounded from above by
	\begin{eqnarray}
	\rho\le \max_i\left\{\frac{\max_x \mu_{i}(x)-\min_x \mu_{i}(x)}{\max_x \mu_{i}(x)+\min_x \mu_{i}(x)}\right\}. 
	\label{22}
	\end{eqnarray}
	
	Furthermore, owing to \cite{Moulinec2}, for the high contrast coefficients, is better to choose
	\begin{equation}
	\mu_{i0} =-\sqrt{\min_x
		(\mu_{i}(x))\cdot  \max_x
		(\mu_{i}(x))}.
	\end{equation}

\subsection{Voigt-Reuss bounds for the effective coefficients}\label{homest}
Let $L^{hom}$ be the effective operator with tensor  $C^{hom}$, defined (see (3.6) in \cite{Pastukhova}) by 
\begin{equation}
J_0 =C^{hom}E_0 =<C(x)E(x)>= <C(y)(D w (y) + E_{0})>,
\label{32}
\end{equation}
where $J_0 =<J>$ the mean value of the field $J$. The approach to find the effective tensor for the fourth order PDE is presented in \cite{Pastukhova}. The tensor $C_{hom}$ inherits the properties of symmetry and positive definiteness.\\
The following Voigt-Reuss estimates for the homogenized tensor are known, e.g. form\cite{Jikov} for the homogenization of the second-order elliptic operators and were generalized in \cite{Pastukhova} for higer order PDEs:
\begin{align}
\left<C^{-1}(x)\right>^{-1} \le C_{hom} \le\  \left<C(x)\right>,
\label{VR}
\end{align}
called the Voigt-Reuss bracketing. Here $C^{-1}$ is the inverse tensor for $C$ and $<\cdot>$ is an averaging over the periodicity cell. The following examples cane be found, e.g., in \cite{Jikov}
\begin{example}
	(i)	For a laminate with two isotropic faces along the faces with conductivities $\alpha$ and $\beta$ the effective homogenized coefficient is the mean value of both and across the layers the harmonic mean.
	\\
	(ii) For the chess-board structure, assuming $C_1=\alpha I$, $C_2=\beta I$, $C_{hom}=\sqrt{\alpha \beta}I$ and for a monocrystal $C_1$ with eigenvalues $\lambda_1,\lambda_2$, and $C_2= R^T C_1 R$,
	$R$ is an orthogonal matrix. 
	The homogenized tensor will be $C_{hom}=\sqrt{\lambda_1 \lambda_2}I$
\end{example}

%



\begin{thebibliography}{99}
	
\bibitem{fjm} G. Friesecke, R.D. James, and S. M\"uller. A theorem on
geometric rigidity and the derivation of nonlinear plate theory from
three-dimensional elasticity. Comm. Pure Appl. Math., 55(11):1461-1506, 2002.		

%
%
%
%
%
%
%
%
%



	\bibitem{Pastukhova} S.E. Pastukhova, Operator error estimates for homogenization of fourth order elliptic equations, St. Petersburg Math. 2 Vol. 28 (2017), No. 2, Pages 273-289, 2016
%


\bibitem{Moulinec} J. C. Michel, H. Moulinec and P. Suquet, A numerical method for computing the overall responce of nonlinear composites with complex microstructure,
Compute. Methods Appl. Mech. Engineering, 157, 1998, pp. 69-94, 

\bibitem{Moulinec2} J. C. Michel, H. Moulinec and P. Suquet, A computational scheme for linear and non-linear composites
with arbitrary phase contrast,
Int. J. Numer. Meth. Engng 2001; 52:139-160 (DOI: 10.1002/nme.275)

	

\bibitem{Milton02} G.W. Milton, 2002, The Theory of Composites, Cambridge University Press.

\bibitem{Grab99}Y. Grabovsky and G.W. Milton, Rank one plus a null-Lagrangian is an inherited property of
two-dimensional compliance tensors under homogenisation, Proceedings of the Royal Society of Edinburgh, 128A, 283-299, 1998

\bibitem{Grab2000} Y. Grabovsky and G. Milton, D.S. Sage, 
Exact Relations for Effective Tensors of Composites: Necessary Conditions and Sufﬁcient Conditions, 2000

\bibitem{EM} D.J. Eyre and G.W. Milton, A fast numerical scheme for computing the response of composites using grid refinement, Eur. Phys. J. AP 6, 41-47 (1999) 


\bibitem{Jikov}V. V. Jikov, S. M. Kozlov, O. A. Oleinik, Homogenization of Differential Operators and Integral Functionals, Springer, 1994 
	
	
\bibitem{Bens2}	A. Bensoussan, L. Boccardo and F. Murat. Homogenization of elliptic equations with principal part
	not in divergence form and hamiltonian with quadratic growth. Comm. Pure Appl. Math. 39
	(1986), 769-805.
	
\bibitem{McLean} W. McLean, Strongly elliptic systems and boundary integral equations, Cambridge university press, 2000
	
	
	
	\bibitem{Gri} G.Gripenberg, S.-O.Londen \& O.Staffan,  {\it Volterra Integral and
		Functional Equations }, Cambridge Univercity Press, 1990.
	\bibitem{O2009} J. Orlik, Existence and stability estimate for the solution of the ageing hereditary linear viscoelasticity problem, Abstract and Applied Analysis
	Volume 2009, Article ID 828315, 19 pages
	http://dx.doi.org/10.1155/2009/828315 
%
	
	
	
	\bibitem{Willis} J.R. Willis (1977). Bounds and Self-Consistent Estimates for the Overall Properties of Anisotropic Composites. Journal of the Mechanics and Physics of Solids. 25. 185-202. 10.1016/0022-5096(77)90022-9.
	
	\bibitem{Toupin} 
	R. A. Toupin, Saint-Venant's Principle, Archive for Rational Mechanics and Analysis, 1965 - Springer

\bibitem{Suslina} T.A. Suslina, Homogenization of the Dirichlet problem for higher-order elliptic equations with periodic coefficients, St. Petersburg Math. J., Vol. 29 (2018), No. 2, Pages 325-362
http://dx.doi.org/10.1090/spmj/1496	

\bibitem{Niu} W. Niu, Y. Xu, Uniform boundary estimates in homogenization of higher-order elliptic systems, Annali di Matematica Pura ed Applicata (1923-), 2017 - Springer

\bibitem{Shen} W. Niu, Z. Shen, Y. Xu, Convergence rates and interior estimates in homogenization of higher order elliptic systems, Journal of Functional Analysis, 2018 - Elsevier

\bibitem{Auscher} P. Auscher and M. Qafsaoui, Equivalence between regularity theorems and heat kernel estimates
for higher-order elliptic operators and systems under divergence form, J. Funct. Anal. 177
(2000), 310-364.

\bibitem{Barton} A. Barton, Gradient estimates and the fundamental solution for higher-order elliptic systems
	with rough coefficients, Manuscripta Math. 151 (2016), no. 3-4, 375-418.

\bibitem{pseu} J. Saranen, G. Vainikko, Periodic Integral and Pseudodifferential Equations with Numerical Approximation, Springer, 2002

\end{thebibliography}
\end{document}